\newcommand{\bc}{\mathbb{C}}
\newcommand{\bz}{\mathbb{Z}}
\newcommand{\p}{\frac{\partial}}
\newcommand{\x}{\partial x}
\newtheorem{theorem}{Theorem}[section]
\newtheorem{proposition}[theorem]{Proposition}
\newtheorem{lemma}[theorem]{Lemma}
\theoremstyle{remark}
\numberwithin{equation}{section}
\def\mg{\mathfrak{g}}
\def\mh{\mathfrak{h}}
\def\sl{\mathfrak{sl}}
\def\sl{\mathfrak{sl}}
\def\md{\mathcal{D}}
\def\C{\mathbb{C}}
\def\Z{\mathbb{Z}}
\def\N{\mathbb{N}}
\def\bt{{\bf{t}}}
\def\bm{{\bf{m}}}
\def\bs{{\bf{s}}}
\def\bx{{\bf{x}}}
\def\bn{{\bf{n}}}
\def\br{{\bf{r}}}
\begin{document}
\title[Localization of free  field realizations]{Localization of highest weight modules of  a class of  Extended Affine Lie Algebras
}
\author{Genqiang Liu,
Yang Li, Yihan Wang}
\date{}\maketitle

\begin{abstract}In 2006, Gao and Zeng \cite{GZ} gave the free  field realizations
of highest weight modules over  a class of  extended affine Lie algebras. In the present paper,
applying  the technique of localization to those free  field realizations, we construct  a class of new  weight modules over the extended affine Lie algebras.
We give necessary and sufficient conditions for these modules to be irreducible. In this way, we construct  free field realizations for a class of simple weight modules with infinite weight multiplicities over the extended affine Lie algebras.
\end{abstract}
\vskip 10pt \noindent {\em Keywords:}  localization, weight module, irreducible  module.

\vskip 5pt
\noindent
{\em 2010  Math. Subj. Class.:}
 17B10, 17B20,
17B65, 17B66, 17B68

\vskip 10pt

\allowdisplaybreaks

\section{Introduction}

In recent years, extended affine Lie algebras (EALAs) have been
studied in great detail.  EALAs are  Lie algebras which have a
non-degenerate invariant form, a self-centralizing finite
dimensional ad-diagonalizable abelian subalgebra (i.e., a Cartan
subalgebra), a discrete irreducible root system, and ad-nilpotency
of non-isotropic root spaces (see \cite{AABGP,BGK} for
definitions and structure theory).
There are many EALAs which allow not only the Laurent polynomial
algebras as co-ordinate algebras but also quantum tori, Jordan tori
and Octonion tori as co-ordinate algebras depending on the type of
algebras (see
\cite{AABGP,BGK,Y1,Y2,Y3}). For
instances, EALAs of type $A_{d-1}$ are tied up with the Lie algebra
$\mathfrak {gl}_d(\C_q)$. Quantum tori are important algebras in the theories of algebra and non-commutative geometry, see \cite{MP} and \cite{Ma}.
 To get an
EALA one has to form appropriate central extension of $\mathfrak
{gl}_d(\C_q)$ and add certain outer derivations (just like one
obtains an affine Kac-Moody Lie algebra from a  loop algebra  by
forming a one-dimensional central extension and then adding the
degree derivation).
Unlike the affine Lie algebras, the
representation theory of EALAs is far from well developed.
See \cite{DVF,E,ER,EZ,G,G1,G2,G3,L} for several interesting results on the representation theory for  the extended affine Lie algebras.

Free field realizations of Lie algebras play important role in representation
theory and conformal field theory. For an affine Lie algebra $\mathfrak{L}$, imaginary Verma modules of $\mathfrak{L}$ arise from non-standard partitions of the root system of $\mathfrak{L}$, see \cite{F}.
A $q$-version of imaginary Verma modules for the quantum groups of type $U_q(A^{(1)}_1)$ was constructed in \cite{CFKM}, and was further studied in \cite{CFM1,CFM2}.
Free field realizations of imaginary Verma modules over the affine Lie
algebra $A^{(1)}_1$ were constructed  in \cite{JK} for zero central charge and in \cite{W} for arbitrary
central charge. In \cite{FGR}, the localization technique was used to construct a new family of free field realizations.
 In particular, the (twisted) localization of imaginary Verma modules, for the Kac-Moody Lie algebra $A^{(1)}_1$, provides new irreducible weight dense modules with infinite weight multiplicities.
 In \cite{GZ}, the free  field realizations
of highest weight modules over  the  extended affine Lie algebra $\widehat{\frak{gl}_{2}(\bc_q)}$ were first given. In \cite{Z}, those  realizations were
generalized to $\widehat{\frak{gl}_{d}(\bc_q)}$ and the simplicity of the corresponding  modules was also given.
In the present paper, inspired by the idea of \cite{FGR},  we will construct  free field realizations for a class of simple weight modules with infinite weight multiplicities over the extended affine Lie algebras.

The organization of the paper  is as follows. In Section 2, we recall the  free  field realizations
of highest weight modules and their simplicity. In Subsection 3.1, we collect some preliminary results on the twisted localization.
In Subsection 3.2, we decide the simplicity of $\md_\bm M/M$. We show that $\md_\bm M/M$ is simple if and only if $\mu \notin \Z$, see Theorem \ref{Maintheorem1}. In Subsection 3.3, we give the simplicity of $\md_\bm^bM$ in the case $b\not\in\Z$.
We prove that $\md_\bm^bM$ is irreducible if and only if $b+\mu \notin \Z$, see Theorem \ref{maintheorem2}.

We denote by $\mathbb{Z}$, $\mathbb{Z}_+$, $\mathbb{N}$ and
$\mathbb{C}$ the sets of  all integers, nonnegative integers,
positive integers and complex numbers, respectively. For any Lie
algebra ${L}$, we denote  its
universal enveloping algebra by $U(L)$.
\section{Preliminaries}

 Let $q$ be a non-zero complex number. Let $\C_q$ be the
associative algebra generated by $t_1^{\pm1}, t_2^{\pm1}$ subject to the relation
$$t_1t_1^{-1}=t_1^{-1}t_1=t_2t_2^{-1}=t_2^{-1}t_2=1, t_2t_1=qt_1t_2.$$

For any $\bm=(m_1,m_2)\in\Z^2$, denote ${\bt}^{\bm}=t_1^{m_1}t_2^{m_2}$.
Then it is clear that $${\bt}^{\bm}{\bt}^{\bn}=q^{m_2n_1}\bt^{\bm+\bn}=q^{m_2n_1-m_1n_2}t^{\bn}t^{\bm}$$ for any $\bm,\bn\in\Z^2$.

Let $\frak{gl}_2(\C_q):=\frak{gl}_2(\C)\otimes_\C \C_q$ be the general linear Lie algebra coordinated by $\C_q$. Denote $X(\bm)=X\otimes t^\bm$ for
$X\in\frak{gl}_2(\C), \bm\in \Z^2$. We identify $X$ with $X(0)$.
Then the  Lie bracket of $\frak{gl}_2(\C_q)$ is given by
\begin{align*} & [e_{ij}(\bm), e_{kl}(\bn)]
=\delta_{jk}q^{m_2n_1}e_{il}(\bm+\bn)-\delta_{il}q^{n_2m_1}e_{kj}(\bm+\bn)
\end{align*}
for $\bm,\bn\in\bz^2$, $1\leq i, j, k, n\leq 2$, where
$e_{ij}$ is the matrix whose $(i, j)$-entry is $1$ and $0$
elsewhere.

Clearly $\frak{gl}_2(\C_q)$  is $\Z^2$-graded and to reflect this fact we add degree derivations.
Let $\widehat{\frak{gl}_{2}(\bc_q)} ={\frak{gl}_{2}(\bc_q)} \oplus \bc d_1
\oplus \bc d_2$ and extend the Lie bracket as
$$[d_1, X(\bm)]=m_1X(\bm),\ \  [d_2, X(\bm)]=m_2X(\bm).$$

The Lie subalgebra $[{\frak{gl}_{2}(\bc_q)},{\frak{gl}_{2}(\bc_q)}]  \oplus \bc d_1
\oplus \bc d_2$ of $\mg$ is called an extended
affine Lie algebra of type $A_{1}$ with nullity $2$. ( See [AABGP] and [BGK]
for definitions).

Let $\frak{h}=\C e_{11}\oplus \C e_{22}\oplus \bc d_1\oplus \bc d_2$ which is a Cartan subalgebra  of
$\widehat{\frak{gl}_{2}(\bc_q)}$. A $\mg$-module  $M$ is called a weight module if $\mh$ acts diagonally on  $M$, i.e.
$ M=\oplus_{\lambda\in \mh^*} M_\lambda,$
where $M_\lambda:=\{v\in M \mid xv=\lambda(x) v, \forall\ x\in \mh\}.$  A nonzero element $v\in M_\lambda$ is called a weight vector.

Let us denote
$$\frak{n}_+=e_{12}\otimes \bc_q,\ \  \frak{n}_-=e_{21}\otimes \bc_q,$$ $$\mathcal{H}=(e_{11}\otimes \C_q) \oplus (e_{22}\otimes \C_q)\oplus  \bc d_1\oplus \bc d_2.$$
Then $$\mg=\frak{n}_+\oplus \mathcal{H}\oplus \frak{n}_-.$$
For a weight $\mg$-module $M$, a weight vector $v$ in $M$ is called a highest weight vector if $\frak{n}_+v=0$.
The module $M$ is called a highest weight module if it is generated by  a highest weight vector.

Next, we will recall a class of  highest weight modules over
$\widehat{\frak{gl}_{2}(\bc_q)}$ defined by free fields, see \cite{GZ}.

 Let
 $$\bc [\mathbf{x} ]=\bc [x_{\bm}, \bm\in\bz^2]$$ be a polynomial ring with infinitely many
variables $x_{\bm}$.  Denote by $\mathcal{A}(\bx)$ the associative algebra of formal power series of differential operators on
$\bc [\mathbf{x} ]$.  By \cite{GZ}, there is an algebra homomorphism $$\Phi: U(\mg)\rightarrow \text{End}(\bc [\mathbf{x} ])$$ defined by:
\begin{eqnarray*}
e_{21}(\bn)&\mapsto&x_\bn ,\\
e_{12}(\bn)&\mapsto&q^{-n_1n_2}\mu\p{\x_{-\bn}}
-\sum_{\bs,\br\in \bz^2}
q^{n_2r_1+s_2n_1+s_2r_1}x_{\bn+\bs+\br}\p{\x_\bs}\p{\x_{\br}},\\
e_{22}(\bn)&\mapsto&\sum_{\bs\in \bz^2} q^{s_1n_2}x_{\bn+\bs}\p{\x_\bs},\\
e_{11}(\bn)&\mapsto&\mu\delta_{\bn,0}-\sum_{\bs\in \bz^2} q^{s_2n_1}x_{\bn+\bs}\p{\x_\bs},\\
 D_1&\mapsto&\sum_{\bs\in \bz^2}s_1x_\bs\p{\x_\bs},\\
 D_2&\mapsto&\sum_{\bs\in \bz^2}s_2x_\bs\p{x_\bs},
\end{eqnarray*}
where $\mu\in\C$.

Via the homomorphism $\Phi$, $\C [\mathbf{x} ]$ can be viewed as  a module over $\mg$.  We can see that $\bc [\mathbf{x} ]$ is a module of $\mg$ generated by $1$, and $e_{12}(\bn)1 = 0$ for any $\bn\in\Z^2$.
Hence $\bc [\mathbf{x} ]$ is a highest weight module of $\mg$, and $1$ is a highest weight
vector such that $e_{11}1=\mu, e_{22}1=0$.
The following result was given by Zeng, see Corollary 4.1 in \cite{Z}.
\begin{theorem}The $\mg$-module $\bc [\mathbf{x} ]$ is irreducible if and only if $\mu\neq 0$.
\end{theorem}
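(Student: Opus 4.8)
The plan is to prove the two implications separately, starting with the easy converse. If $\mu=0$, then $\Phi(e_{12}(\bn))=-\sum_{\bs,\br\in\bz^2}q^{n_2r_1+s_2n_1+s_2r_1}x_{\bn+\bs+\br}\p{\x_\bs}\p{\x_\br}$, a differential operator of net polynomial degree $-1$ that annihilates every polynomial of degree $\le 1$; since $\Phi(e_{21}(\bn))=x_{\bn}$ raises polynomial degree by one while $\Phi(e_{11}(\bn))$, $\Phi(e_{22}(\bn))$, $\Phi(D_1)$ and $\Phi(D_2)$ preserve it, the subspace of polynomials with vanishing constant term is a $\mg$-submodule; being proper and nonzero it witnesses the reducibility of $\bc[\bx]$. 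Assume now $\mu\neq 0$.

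Since $\Phi(e_{21}(\bn))=x_{\bn}$ and the operators $x_{\bn}$ generate $\bc[\bx]$ from $1$, it suffices to show that any nonzero $\mg$-submodule $N$ contains a nonzero constant, for then $1\in N$ and $N=U(\mg)\cdot 1=\bc[\bx]$. Given $0\neq f\in N$, I would first make $f$ homogeneous: $\Phi(e_{11}(0))=\mu-\sum_{\bs}x_{\bs}\p{\x_\bs}$ acts on the degree-$k$ homogeneous component by the scalar $\mu-k$, so an appropriate Lagrange interpolation polynomial in $\Phi(e_{11}(0))$, applied to $f$, isolates a single nonzero homogeneous component. Replacing $f$ by a nonzero homogeneous vector of $N$ of \emph{minimal} degree $d$, it remains to rule out $d\geq 1$; this I would do by producing $\bn\in\bz^2$ with $0\neq\Phi(e_{12}(\bn))f\in N$, since $\Phi(e_{12}(\bn))f$ is homogeneous of degree $d-1$, contradicting minimality.

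Write $\Phi(e_{12}(\bn))=q^{-n_1n_2}\mu\,\p{\x_{-\bn}}-T(\bn)$, where $T(\bn)=\sum_{\bs,\br}q^{n_2r_1+s_2n_1+s_2r_1}x_{\bn+\bs+\br}\p{\x_\bs}\p{\x_\br}$, and distinguish two cases. If $T(\bn)f=0$ for every $\bn$ (automatic when $d=1$, as $T(\bn)$ kills linear polynomials), then $\Phi(e_{12}(\bn))f=q^{-n_1n_2}\mu\,\p{\x_{-\bn}}f$; as $f$ is nonconstant there is $\bm$ with $\p{\x_\bm}f\neq 0$, whence $\Phi(e_{12}(-\bm))f=q^{-m_1m_2}\mu\,\p{\x_\bm}f\neq 0$ because $\mu\neq 0$ — this is the one place the hypothesis is used. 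Otherwise $T(\bn_0)f\neq 0$ for some $\bn_0$. Grouping by output variable, $T(\bn)f=\sum_{\bv}x_{\bn+\bv}C_{\bv}(\bn)$ with $C_{\bv}(\bn)=\sum_{\bs+\br=\bv}q^{n_2r_1+s_2n_1+s_2r_1}\p{\x_\bs}\p{\x_\br}f=\sum_{\bs}\big(q^{s_2(v_1-s_1)}\p{\x_\bs}\p{\x_{\bv-\bs}}f\big)(q^{s_2})^{n_1}(q^{v_1-s_1})^{n_2}$, so some coefficient of some $C_{\bv}$ is a finite linear combination of the characters $\bn\mapsto(q^{s_2})^{n_1}(q^{v_1-s_1})^{n_2}$ of $\bz^2$ that is not identically zero; by Vandermonde applied successively in $n_1$ and in $n_2$, such a function cannot vanish on any half-plane $\{n_1\geq M\}$. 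Taking $\bn$ in such a half-plane with $M$ so large that $-\bn$ and every $\bn+\bs+\br$ (over indices $\bs,\br$ occurring in $f$) lie outside the finite index set of $f$, the first-order part of $\Phi(e_{12}(\bn))$ annihilates $f$, the variable $x_{\bn+\bv}$ is disjoint from those appearing in $C_{\bv}(\bn)$, and therefore $\Phi(e_{12}(\bn))f=-T(\bn)f\neq 0$.

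I expect the last case to be the main obstacle: one must single out a monomial whose coefficient in $T(\bn)f$ is visibly a nonzero exponential polynomial in $\bn$, and then verify that moving $\bn$ far out simultaneously removes the first-order contribution and prevents any accidental coincidence of variables or cancellation. That monomial-and-character bookkeeping, not the brief appeal to $\mu\neq 0$, is the part I anticipate needing the most care to write in full detail.
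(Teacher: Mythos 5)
The paper does not prove this statement; it is cited as Corollary 4.1 of Zeng [Z], so there is no in-paper argument to compare against, and you are effectively supplying the missing proof. Your argument is correct. The $\mu=0$ direction is right: the second-order part $T(\bn)$ of $\Phi(e_{12}(\bn))$ annihilates polynomials of degree $\le 1$ and lowers degree by one otherwise, so once the first-order term $\mu\p{\x_{-\bn}}$ drops out, the augmentation ideal is a proper nonzero submodule. For $\mu\neq 0$: isolating a homogeneous component of minimal degree via the distinct eigenvalues $\mu-k$ of $\Phi(e_{11}(0))=\mu-\sum_{\bs}x_{\bs}\p{\x_{\bs}}$ is valid, and splitting on whether $T(\bn)f$ vanishes identically in $\bn$ is the right dichotomy, with $\mu\neq 0$ used exactly where it must be (Case 1). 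In Case 2, the inference that some $C_{\mathbf v}$ is a not-identically-zero exponential polynomial (otherwise $T(\bn)f\equiv 0$ for every $\bn$, contradicting $T(\bn_0)f\neq 0$) is correct, and a nonzero finite linear combination of distinct characters of $\Z^2$ cannot vanish on a full half-plane $\{n_1\ge M\}$; this lets you push $\bn$ far enough that $-\bn$ and all $\bn+\bs+\br$ are fresh indices (disjoint from the support of $f$, so the monomials $x_{\bn+\mathbf v}\cdot(\text{monomial of }C_{\mathbf v})$ cannot cancel across different $\mathbf v$) while keeping $T(\bn)f\neq 0$. The one point worth stating explicitly when you write this out: if $q$ is a root of unity, distinct $\bs$ can give coinciding characters $(q^{s_2})^{n_1}(q^{v_1-s_1})^{n_2}$, so collect terms by character first and then apply Vandermonde to the collected coefficients; the conclusion is unchanged. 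With that noted, I see no gap in the proposal — only the bookkeeping you already flag as needing care.
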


\section{The simplicity of the twisted localization}

In this section, we will apply the twisted localization functor to the $\mg$-module $\bc [\mathbf{x} ]$ to obtain new irreducible modules with infinite dimensional weight spaces.
The twisted localization functor was introduced by O. Mathieu to classify irreducible cuspidal modules over finite dimensional simple Lie algebras, see [M].
\subsection{The definition of the twisted localization}

Let $U(\mg)$ be the universal enveloping algebra of $\mg$. For a fixed $\bm\in \Z^2$, since $e_{21}(\bm)$ is an ad nilpotent element of $U(\mg)$, $F=\{e_{21}(\bm)^i\mid i\in \Z^+\}$ is a left and
right Ore subset of $U(\mg)$. We denote  the Ore localization of $U(\mg)$ with respect to $F$ by $U^F$.
For a $\mg$-module $M$, we denote  by $\md_\bm M$ the $F$-localization, i.e., $\md_\bm M= U^F\otimes_U M$.

 For $b\in\C $ and $u\in U^F$, we set
$$\Theta_b(u)=\sum_{j\geq 0}\binom{b}{j}(\text{ad}e_{21}(\bm))^j (u) e_{21}(\bm)^{-j},$$
where
$\binom{b}{j}= \frac{b(b-1)\cdots(b-j+1)}{j!}$ .
 Since $\text{ad}e_{21}(\bm)$ is locally nilpotent on $U^F$, the sum above
is actually finite. It is known that $\Theta_b$ is an automorphism of $U^F$. Note that for $b=k\in \Z$, we have $\Theta_b(u)=e_{21}(\bm)^kue_{21}(\bm)^{-k}$.

\begin{proposition}
For $b\in\C^*$, we have that
\begin{align*}
\Theta_b(e_{21}(\bn))=&\ e_{21}(\bn),\\
\Theta_b(e_{12}(\bn))=&\ e_{12}(\bn)+b\big(q^{m_2n_1}e_{22}(\bm+\bn)-q^{m_1n_2}e_{11}(\bm+\bn)\big)e_{21}(\bm)^{-1}\\
& -b(b-1)q^{m_2n_1+m_2m_1+m_1n_2}e_{21}(2\bm+\bn)e_{21}(\bm)^{-2},\\
\Theta_b(e_{11}(\bn))=&\ e_{11}(\bn)+bq^{m_2n_1}e_{21}(\bm+\bn)e_{21}(\bm)^{-1},\\
\Theta_b(e_{22}(\bn))=&\ e_{22}(\bn)-bq^{m_1n_2}e_{21}(\bm+\bn)e_{21}(\bm)^{-1},\\
\Theta_b(D_1)=& D_1-bm_1,\\
\Theta_b(D_1)=& D_2-bm_2,
\end{align*}
for any $\bn\in\Z^2$.
\end{proposition}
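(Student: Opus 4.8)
The plan is to reduce the statement to a short, explicit computation with the structure constants of $\mg$. Since $\mathrm{ad}\,e_{21}(\bm)$ is locally nilpotent on $U^F$, for each generator $g$ of $\mg$ the series $\Theta_b(g)=\sum_{j\ge 0}\binom{b}{j}(\mathrm{ad}\,e_{21}(\bm))^j(g)\,e_{21}(\bm)^{-j}$ is a finite sum, and in fact at most three terms survive: the iterated brackets quickly enter the abelian subalgebra $e_{21}\otimes\C_q$, on which $\mathrm{ad}\,e_{21}(\bm)$ vanishes by the relation $[e_{21}(\bm),e_{21}(\bn)]=0$. So the first step is to write down, straight from the bracket formula $[e_{ij}(\bm),e_{kl}(\bn)]=\delta_{jk}q^{m_2n_1}e_{il}(\bm+\bn)-\delta_{il}q^{n_2m_1}e_{kj}(\bm+\bn)$, the one-step commutators $[e_{21}(\bm),e_{21}(\bn)]=0$, $[e_{21}(\bm),e_{11}(\bn)]=q^{m_2n_1}e_{21}(\bm+\bn)$, $[e_{21}(\bm),e_{22}(\bn)]=-q^{m_1n_2}e_{21}(\bm+\bn)$, $[e_{21}(\bm),e_{12}(\bn)]=q^{m_2n_1}e_{22}(\bm+\bn)-q^{m_1n_2}e_{11}(\bm+\bn)$, together with $[e_{21}(\bm),D_i]=-m_ie_{21}(\bm)$.

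Given these, the easy cases follow at once. From $[e_{21}(\bm),e_{21}(\bn)]=0$ we get $\Theta_b(e_{21}(\bn))=e_{21}(\bn)$ (and hence $\Theta_b$ also fixes $e_{21}(\bm)^{\pm1}$). For $g=e_{11}(\bn)$, $e_{22}(\bn)$, $D_1$ or $D_2$, a second application of $\mathrm{ad}\,e_{21}(\bm)$ lands in $e_{21}\otimes\C_q$ and therefore dies, so only the $j=0$ and $j=1$ terms remain; using $\binom{b}{1}=b$ these give exactly the displayed formulas (for $D_i$ the $j=1$ term equals $b(-m_i)e_{21}(\bm)e_{21}(\bm)^{-1}=-bm_i$, $i=1,2$).

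The only case that needs a second bracket is $e_{12}(\bn)$. Applying $\mathrm{ad}\,e_{21}(\bm)$ to $q^{m_2n_1}e_{22}(\bm+\bn)-q^{m_1n_2}e_{11}(\bm+\bn)$ and invoking the relations above, one finds $(\mathrm{ad}\,e_{21}(\bm))^2(e_{12}(\bn))=-2q^{m_2n_1+m_1m_2+m_1n_2}e_{21}(2\bm+\bn)$, and the third bracket vanishes. Substituting $\binom{b}{2}=\tfrac{b(b-1)}{2}$ into the definition of $\Theta_b$, the factor $2$ cancels and we arrive at precisely the stated expression for $\Theta_b(e_{12}(\bn))$. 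The only point requiring care in the whole argument is the bookkeeping of the $q$-exponents coming from $\delta_{jk}q^{m_2n_1}-\delta_{il}q^{n_2m_1}$ — in particular, in the second bracket for $e_{12}(\bn)$ the contributions of $e_{22}(\bm+\bn)$ and of $e_{11}(\bm+\bn)$ arrive with a priori different powers of $q$, and one must check they coincide and combine — but this is mechanical and there is no conceptual obstacle.
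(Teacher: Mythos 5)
Your proposal is correct and follows essentially the same route as the paper: list the one-step and two-step brackets $(\mathrm{ad}\,e_{21}(\bm))^j$ applied to each generator, observe that the series truncates because the iterated brackets vanish ($(\mathrm{ad}\,e_{21}(\bm))^2=0$ for $e_{11},e_{22},D_i$, $(\mathrm{ad}\,e_{21}(\bm))^3=0$ for $e_{12}$), and substitute $\binom{b}{1}=b$, $\binom{b}{2}=\tfrac{b(b-1)}{2}$ into the defining sum. Your computation of $(\mathrm{ad}\,e_{21}(\bm))^2(e_{12}(\bn))=-2q^{m_2n_1+m_1m_2+m_1n_2}e_{21}(2\bm+\bn)$, including the check that the two $q$-exponents coincide, agrees with what the paper's displayed chain of equalities encodes implicitly.
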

\begin{proof} First, from  $[e_{21}(\bm),e_{21}(\bn)]=0,$
we have that $$\Theta_b(e_{21}(\bn))=e_{21}(\bn).$$
Next, from $(\text{ad}e_{21}(\bm))^3(e_{12}(\bn))=0$, we obtain that
\begin{align*}
\Theta_b(e_{12}(\bn))=&\ e_{12}(\bn)+b[e_{21}(\bm),e_{12}(\bn)]e_{21}(\bm)^{-1}\\
&+\frac{1}{2}b(b-1)[e_{21}(\bm),[e_{21}(\bm),e_{12}(\bn)]]e_{21}(\bm)^{-2}\\
=&\ e_{12}(\bn)+b\big(q^{m_2n_1}e_{22}(\bm+\bn)-q^{m_1n_2}e_{11}(\bm+\bn)\big)e_{21}(\bm)^{-1}\\
&+\frac{1}{2}b(b-1)[e_{21}(\bm),[e_{21}(\bm),e_{12}(\bn)]]e_{21}(\bm)^{-2}\\
=&\ e_{12}(\bn)+b\big(q^{m_2n_1}e_{22}(\bm+\bn)-q^{m_1n_2}e_{11}(\bm+\bn)\big)e_{21}(\bm)^{-1}\\
&-b(b-1)q^{m_2n_1+m_2m_1+m_1n_2}e_{21}(2\bm+\bn)e_{21}(\bm)^{-2}.
\end{align*}
Finally, from $(\text{ad}e_{21}(\bm))^2(e_{11}(\bn))=(\text{ad}e_{21}(\bm))^2(e_{22}(\bn))=0$, we get that
\begin{align*}
\Theta_b(e_{11}(\bn))=e_{11}(\bn)+bq^{m_2n_1}e_{21}(\bm+\bn)e_{21}(\bm)^{-1},
\end{align*}
\begin{align*}
\Theta_b(e_{22}(\bn))=e_{22}(\bn)-bq^{m_1n_2}e_{21}(\bm+\bn)e_{21}(\bm)^{-1}.
\end{align*}
\end{proof}

For a $U^F$-module $M$, by $\Phi^b_\bm M$ we denote the $U^F$-module $M$ twisted by the action
$$ u\cdot v=\Theta_b(u)v, $$ where $v\in M, u\in U^F$.

In what follows, for the $\mg$-module $M=\C[\bx]$, we define $\md_\bm^bM=\Phi^b_\bm \md_\bm M$. Restricted to $U(\mg)$, $\md_\bm^bM$
 becomes a $\mg$-module, which is still denoted by $\md_\bm^bM$.

For $\bm\in \Z^2$, set $\mathcal{E}_\bm=\sum\limits_{\bs,\br\in \bz^2}
q^{-m_2r_1-s_2m_1+s_2r_1}x_{-\bm+\bs+\br}\p{\x_\bs}\p{\x_{\br}}$. Then
$$\Phi(e_{12}(-\bm))=q^{-m_1m_2}\mu\p{\x_{\bm}}-\mathcal{E}_\bm.$$
The  following lemma is a standard fact following from that $e_{12}(\bn)$ is an ad-locally nilpotent element in $U(\mg)$.
\begin{lemma} \label{Nil}If $M$ is an irreducible $\mg$-module, then the action of $e_{12}(\bn)$ on $M$ is torsion free or locally  nilpotent, for any $\bn\in\Z^2$.
\end{lemma}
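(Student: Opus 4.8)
The plan is to exploit the fact that $e_{12}(\bn)$ is ad-locally nilpotent on $U(\mg)$, together with irreducibility of $M$, to show that the set $T = \{v \in M \mid e_{12}(\bn)^k v = 0 \text{ for some } k \geq 0\}$ of $e_{12}(\bn)$-torsion vectors is a submodule; by irreducibility it is then either $0$ (torsion free case) or all of $M$ (in which case, as I explain below, the action is in fact locally nilpotent). First I would verify that $T$ is a $\mg$-submodule. Fix $\bn$ and write $f = e_{12}(\bn)$. For $v \in T$, say $f^k v = 0$, and for any $x \in \mg$, choose $N$ with $(\mathrm{ad}\,f)^N(x) = 0$; then one expands
\begin{equation*}
f^{k+N}(xv) = \sum_{j=0}^{k+N} \binom{k+N}{j} \big((\mathrm{ad}\,f)^j x\big)\, f^{k+N-j} v,
\end{equation*}
and every term vanishes: if $j \geq N$ then $(\mathrm{ad}\,f)^j x = 0$, while if $j < N$ then $k+N-j > k$, so $f^{k+N-j}v = 0$. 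Hence $xv \in T$, and $T$ is a submodule.

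Next, since $M$ is irreducible and $T$ is a submodule, $T = 0$ or $T = M$. If $T = 0$, then $f$ acts without torsion, which is the first alternative. If $T = M$, I would argue that $f$ acts locally nilpotently, i.e.\ that the filtration degrees are locally bounded in a way that is compatible with the module structure — more precisely, I would show that for any fixed $v$ the subspace $U(\mg)v = M$ consists of $f$-torsion vectors, which is automatic once $T = M$; so every vector of $M$ is killed by some power of $f$, which is exactly the definition of the action of $e_{12}(\bn)$ being locally nilpotent on $M$. (Here one should note that "locally nilpotent" means each vector is killed by some power, not that a single power kills all of $M$.)

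The only genuinely delicate point is the submodule argument, and even there the subtlety is merely bookkeeping: one must choose the exponent $k+N$ large enough that in the binomial expansion of $f^{k+N}(xv)$ every summand is annihilated either because the adjoint action has already killed $x$ or because enough copies of $f$ remain to kill $v$. Since $\mathrm{ad}\,f$ is locally nilpotent on all of $U(\mg)$ — this is a structural fact about $\widehat{\frak{gl}_2(\bc_q)}$ with $f = e_{12}(\bn)$ a root vector in a nilpotent root space of the extended affine root system, and is used repeatedly in the paper — such an $N$ exists for each $x$, and by linearity it suffices to treat generators $x$ of $\mg$. Everything else is formal, so I expect no real obstacle.
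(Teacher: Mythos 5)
Your proof is correct and supplies precisely the standard argument that the paper invokes without writing out: the paper only remarks that the lemma ``is a standard fact following from that $e_{12}(\bn)$ is an ad-locally nilpotent element in $U(\mg)$.'' Your verification that the torsion subspace $T$ is a $\mg$-submodule---via the identity $f^{k+N}(xv)=\sum_{j}\binom{k+N}{j}\bigl((\mathrm{ad}\,f)^{j}x\bigr)f^{k+N-j}v$ together with ad-local nilpotency of $f=e_{12}(\bn)$---is exactly that standard proof, and the irreducibility dichotomy $T=0$ or $T=M$ then yields the two alternatives.
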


In the module $\md_\bm M$, we have the following useful formulas.

\begin{lemma}\label{3.5} Let  $v_0:=x_{\bn_1}^{i_1}\cdots x_{\bn_k}^{i_k}$, where $\bn_1,\cdots,\bn_k\neq \bm$, $i_1,\cdots,i_k\in\N$. Then
\begin{itemize}
\item[(1).]$e_{12}(-\bm)x_\bm^{-i}v_0=-x_m^{-i} \mathcal{E}_\bm(v_0)+(\sum\limits_{j=1}^k2i_j-\mu-i-1)iq^{-m_2m_1}x_\bm^{-i-1}v_0$,
   \item[(2).]$e_{12}(-\bm)x_\bm^{-i}=(-\mu-i-1)iq^{-m_2m_1}x_\bm^{-i-1}$,
   \item[(3).]$\mathcal{E}_\bm^{d+1} v_0=0$, where $d=i_1+\cdots+i_k$.
\end{itemize}
\end{lemma}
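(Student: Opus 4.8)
The plan is to compute everything directly from the formula $\Phi(e_{12}(-\bm))=q^{-m_1m_2}\mu\,\p{\x_\bm}-\mathcal{E}_\bm$, using the Leibniz (product) rule for the second order operator $\mathcal{E}_\bm$ together with the identification of $\sum_{\bs\in\bz^2}x_\bs\p{\x_\bs}$ with the total degree (Euler) operator on $\bc[\bx]$. Throughout, one observes that, although $\mathcal{E}_\bm$ is a formal infinite sum, only finitely many of its summands act nontrivially on $x_\bm^{-i}v_0$ — namely those whose two derivative indices both lie in $\{\bm,\bn_1,\dots,\bn_k\}$ — so every expression below is a well-defined element of $\md_\bm M$.

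For (3), each summand $x_{-\bm+\bs+\br}\p{\x_\bs}\p{\x_\br}$ of $\mathcal{E}_\bm$ lowers the total degree of a homogeneous polynomial by exactly $1$ (the two derivatives drop it by $2$, the multiplication raises it by $1$); hence $\mathcal{E}_\bm$ maps $\bc[\bx]_n$ into $\bc[\bx]_{n-1}$, with the convention $\bc[\bx]_{n}=0$ for $n<0$. Since $v_0\in\bc[\bx]_d$, applying $\mathcal{E}_\bm$ $d+1$ times yields $\mathcal{E}_\bm^{d+1}v_0\in\bc[\bx]_{-1}=0$.

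For (1), write $f=x_\bm^{-i}$ and $g=v_0$. Because $f$ depends only on the variable $x_\bm$, while $g$ does not involve $x_\bm$, the expansion $\p{\x_\bs}\p{\x_\br}(fg)=(\p{\x_\bs}\p{\x_\br}f)\,g+(\p{\x_\bs}f)(\p{\x_\br}g)+(\p{\x_\br}f)(\p{\x_\bs}g)+f\,\p{\x_\bs}\p{\x_\br}g$ splits $\mathcal{E}_\bm(fg)$ into four groups. The pure-$g$ terms reassemble into $x_\bm^{-i}\mathcal{E}_\bm(v_0)$. The pure-$f$ terms force $\bs=\br=\bm$, contributing $q^{-m_2m_1}i(i+1)x_\bm^{-i-1}v_0$. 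In each of the two mixed groups exactly one of $\bs,\br$ is forced to be $\bm$; the key simplification is that the coefficient $q^{-m_2r_1-s_2m_1+s_2r_1}$ then collapses to $q^{-m_2m_1}$ regardless of the remaining index, so summing that index against $x_\br\p{\x_\br}(v_0)$ (resp. $x_\bs\p{\x_\bs}(v_0)$) and invoking the Euler operator gives $q^{-m_2m_1}(\sum_j i_j)v_0$, which after the factor $-i$ from $\p{\x_\bm}f=-ix_\bm^{-i-1}$ makes the two mixed groups together contribute $-2iq^{-m_2m_1}(\sum_j i_j)x_\bm^{-i-1}v_0$. Combining with $q^{-m_1m_2}\mu\,\p{\x_\bm}(x_\bm^{-i}v_0)=-i\mu q^{-m_2m_1}x_\bm^{-i-1}v_0$ (and using $q^{-m_1m_2}=q^{-m_2m_1}$) and collecting the coefficient of $x_\bm^{-i-1}v_0$ gives $iq^{-m_2m_1}(2\sum_j i_j-\mu-i-1)$, which is exactly the claimed identity. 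Statement (2) is the case $k=0$: then $v_0=1$, $\mathcal{E}_\bm(1)=0$, and the Euler term vanishes.

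None of the three parts hides a genuine difficulty; the only step requiring care is the combinatorics of the two mixed Leibniz groups in (1) — pinning down which index is forced to equal $\bm$, checking that the coefficient really does collapse to $q^{-m_2m_1}$ in each case, and recognizing $\sum_\bs x_\bs\p{\x_\bs}$ as the degree operator so that the two groups contribute equally.
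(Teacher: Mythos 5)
Your proof is correct and follows essentially the same approach as the paper: expand $e_{12}(-\bm)x_\bm^{-i}v_0$ directly via the Leibniz rule, collect the pure-$x_\bm$, pure-$v_0$, and mixed terms, and use the degree (Euler) argument for (3). The only cosmetic difference is that the paper carries out the expansion for a general $e_{12}(-\bn)$ and then specializes $\bn=\bm$ (since it reuses the general formula later in the proof of the main theorem), whereas you specialize to $\bn=\bm$ from the start and observe the coefficient collapse $q^{-m_2r_1-s_2m_1+s_2r_1}\to q^{-m_2m_1}$ directly.
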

\begin{proof}For $\bn\in\Z^2$, we can compute that
\begin{align*}& e_{12}(-\bn)x_\bm^{-i}x_{\bn_1}^{i_1}\cdots x_{\bn_k}^{i_k}\\
=&(q^{-n_1n_2}\mu\p{\x_{\bn}}
-\sum_{\bs,\br\in \bz^2}
q^{-n_2r_1-s_2n_1+s_2r_1}x_{-\bn+\bs+\br}\p{\x_\bs}\p{\x_{\br}})x_\bm^{-i}x_{\bn_1}^{i_1}\cdots x_{\bn_k}^{i_k}\\
=&-i\delta_{\bn,\bm}q^{-n_1n_2}\mu x_\bm^{-i-1}x_{\bn_1}^{i_1}\cdots x_{\bn_k}^{i_k}+x_\bm^{-i}q^{-n_1n_2}\mu\p{\x_{\bn}}(x_{\bn_1}^{i_1}\cdots  x_{\bn_k}^{i_k})\\
&+\sum_{\bs\in \bz^2}iq^{-n_2m_1-s_2n_1+s_2m_1}x_{-\bn+\bs+\bm}\p{\x_\bs}x_\bm^{-i-1}x_{\bn_1}^{i_1}\cdots x_{\bn_k}^{i_k}\\
& -\sum_{j=1}^k\sum_{\bs\in \bz^2}q^{-n_2n_{j1}-s_2n_1+s_2n_{j1}}i_jx_{-\bn+\bs+\bn_j}\p{\x_\bs}x_\bm^{-i}x_{\bn_1}^{i_1}\cdots x_{\bn_j}^{i_j-1}\cdots x_{\bn_k}^{i_k}\\
=&-x_\bm^{-i-1}i\delta_{\bn,\bm}q^{-n_1n_2}\mu x_{\bn_1}^{i_1}\cdots x_{\bn_k}^{i_k}\\
&-x_\bm^{-i-2}i(i+1)q^{-n_2m_1-m_2n_1+m_2m_1}x_{-\bn+2\bm}x_{\bn_1}^{i_1}\cdots x_{\bn_k}^{i_k}\\
& +x_\bm^{-i-1}\sum_{j=1}^ki_jiq^{-n_2m_1-n_{j2}n_1+n_{j2}m_1}x_{-\bn+\bn_j+\bm}x_{\bn_1}^{i_1}\cdots x_{\bn_j}^{i_j-1}\cdots x_{\bn_k}^{i_k}\\
&+x_\bm^{-i-1}\sum_{j=1}^kii_jq^{-n_2n_{j1}-m_2n_1+m_2n_{j1}}x_{-\bn+\bm+\bn_j}x_{\bn_1}^{i_1}\cdots x_{\bn_j}^{i_j-1}\cdots x_{\bn_k}^{i_k}\\
& +x_\bm^{-i}e_{12}(-\bn)(x_{\bn_1}^{i_1}\cdots x_{\bn_k}^{i_k}),
\end{align*}
Take $\bn=\bm$ in the above computation, (1) follows. (2), (3) are clear.
\end{proof}

\subsection{The simplicity for the case $b\in\Z$}

Let $\C[x_\bm^{-1}, \bx, \hat x_\bm]$ be the polynomial ring in the variables $x_{\bm}^{-1}, x_{\bn}, \bn\neq \bm$.
\begin{lemma} \label{Generator}Let $\mu\notin \Z$. Then
 the $\mg$-module $\md_\bm M/M$  is generated by $x_\bm^{-1}$.
\end{lemma}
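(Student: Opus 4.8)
The plan is to make the localization $\md_\bm M$ completely explicit and then to build up $\md_\bm M/M$ out of $x_\bm^{-1}$ by first applying $e_{12}(-\bm)$ repeatedly to climb through the negative powers of $x_\bm$, and then applying the $e_{21}(\bn)$'s to insert the remaining variables. First I would record that $e_{21}(\bm)$ acts on $M=\C[\bx]$ as multiplication by the variable $x_\bm$, hence torsion-freely, so the canonical map $M\to\md_\bm M$ is injective and $\md_\bm M$ may be identified with the localized polynomial ring $\C[x_\bm^{-1},\bx,\hat x_\bm]$; under this identification (using that $\frak{n}_-$ is abelian) each $e_{21}(\bn)$ still acts as multiplication by $x_\bn$. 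Since $M$ is then the span of the monomials $x_\bm^{i}v_0$ with $i\in\Z_+$ and $v_0$ a monomial in the variables $x_\bn$, $\bn\neq\bm$, the quotient $\md_\bm M/M$ has as a basis the images of the monomials $x_\bm^{-i}v_0$ with $i\in\N$ and $v_0$ such a monomial. Thus it suffices to show that every such image lies in the $\mg$-submodule $N$ of $\md_\bm M/M$ generated by the image of $x_\bm^{-1}$.

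The core of the argument is to put all pure powers $x_\bm^{-i}$, $i\in\N$, into $N$, which I would do by induction on $i$, the case $i=1$ being the definition of $N$. By Lemma \ref{3.5}(2) we have, in $\md_\bm M$,
$$e_{12}(-\bm)\,x_\bm^{-i}=(-\mu-i-1)\,i\,q^{-m_2m_1}\,x_\bm^{-i-1},$$
and since $M$ is a submodule this identity descends to $\md_\bm M/M$. This is the one and only place where the hypothesis $\mu\notin\Z$ enters: it guarantees that the scalar $(-\mu-i-1)i$ is nonzero for every $i\in\N$, so that applying $e_{12}(-\bm)$ sends the image of $x_\bm^{-i}$ onto a nonzero multiple of the image of $x_\bm^{-i-1}$; hence the images of all $x_\bm^{-i}$, $i\in\N$, lie in $N$. (Were $-\mu$ an integer $\geq 2$, the chain would stall at $i=-\mu-1$.)

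To conclude, I would insert the remaining variables: writing an arbitrary basis monomial as $x_\bm^{-i}v_0$ with $v_0=x_{\bn_1}^{i_1}\cdots x_{\bn_k}^{i_k}$ and all $\bn_j\neq\bm$, the identity $e_{21}(\bn_1)^{i_1}\cdots e_{21}(\bn_k)^{i_k}\cdot x_\bm^{-i}=x_\bm^{-i}v_0$, valid because each $e_{21}(\bn_j)$ acts as multiplication by $x_{\bn_j}$ and these commute with one another, shows that the image of $x_\bm^{-i}v_0$ lies in $N$; as these images span $\md_\bm M/M$, we get $N=\md_\bm M/M$. I do not expect a genuine obstacle here: the only points requiring care are the identification $\md_\bm M\cong\C[x_\bm^{-1},\bx,\hat x_\bm]$ with the $e_{21}(\bn)$ acting by multiplication (which follows from $\frak{n}_-$ being abelian together with the torsion-freeness of $x_\bm$ on $M$), and the non-vanishing of $(-\mu-i-1)i$ for $i\in\N$, which is precisely what the hypothesis $\mu\notin\Z$ delivers; beyond these the argument is a short induction followed by one application of the $e_{21}(\bn)$'s.
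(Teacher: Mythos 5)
Your proof is correct and follows essentially the same route as the paper's: first reach all $x_\bm^{-1-i}$ from $x_\bm^{-1}$ by applying $e_{12}(-\bm)$ (the paper computes $e_{12}(-\bm)^i x_\bm^{-1}$ in closed form, while you invoke Lemma~\ref{3.5}(2) and induct — the same computation), noting $\mu\notin\Z$ makes the scalars nonzero, and then multiply in the remaining variables via the $e_{21}(\bn)$. The identification of $\md_\bm M$ with $\C[x_\bm^{-1},\bx,\hat x_\bm]$ and the fact that $e_{21}(\bn)$ still acts by multiplication (since $\frak n_-$ is abelian) are exactly the implicit ingredients the paper uses as well.
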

\begin{proof} As vector spaces $\md_\bm M/M\cong x_\bm^{-1}\C[x_\bm^{-1}, \bx, \hat x_\bm]$. For any $i\in\N$, we have that
\begin{align*}e_{12}(-\bm)^ix_\bm^{-1}=& q^{-im_1m_2}(\mu\p{\x_\bm}-x_\bm\p{\x_\bm}\p{\x_\bm})^ix_\bm^{-1}\\
=& (-1)^iq^{-im_1m_2}i!(\mu+2)\cdots (\mu+1+i)x_\bm^{-1-i}.
\end{align*}
The condition that $\mu\notin \Z$ forces that $x_\bm^{-1-i}$ can be generated by $x_\bm^{-1}$.
Then from  $ e_{21}(\bn)x_\bm^{-1-i}=x_\bm^{-1-i}x_\bn$ for any $\bn\in\Z^2$ with $\bn\neq \bm$, we see that $x_\bm^{-1}$ can generate  $\md_\bm M/M$.
\end{proof}
\begin{theorem}\label{Maintheorem1}Let $\bm\in \Z^2, b\in \C$.
 If $b\in \Z$, then $\md_\bm^bM\cong \md_\bm M$ and $\md_\bm M/M$ is irreducible if and only if $\mu \notin \Z$.
\end{theorem}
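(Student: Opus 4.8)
\textit{Proof proposal.}

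The first assertion is formal. For $b=k\in\Z$ the automorphism $\Theta_k$ of $U^F$ is conjugation by $e_{21}(\bm)^k$, that is $\Theta_k(u)=e_{21}(\bm)^k u\, e_{21}(\bm)^{-k}$. Hence the $\C$-linear map $\varphi\colon \md_\bm M\to \md_\bm^bM$, $\varphi(v)=e_{21}(\bm)^k v$, satisfies $\varphi(uv)=e_{21}(\bm)^k uv=\Theta_k(u)\bigl(e_{21}(\bm)^k v\bigr)=u\cdot\varphi(v)$, where $\cdot$ denotes the twisted action on $\md_\bm^bM$; since $e_{21}(\bm)$ is invertible in $U^F$, $\varphi$ is bijective, hence an isomorphism of $U^F$-modules, and restriction to $U(\mg)$ gives $\md_\bm^bM\cong\md_\bm M$ as $\mg$-modules. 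So everything reduces to deciding when $\md_\bm M/M$ is irreducible.

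For the implication $\mu\notin\Z\Rightarrow \md_\bm M/M$ irreducible: by Lemma \ref{Generator} the module $\md_\bm M/M$ is generated by the image of $x_\bm^{-1}$, so it suffices to show every nonzero submodule $N$ contains (a nonzero multiple of) $x_\bm^{-1}$. First I would note that $e_{21}(\bm)$ acts locally nilpotently on $\md_\bm M/M$ --- indeed $e_{21}(\bm)^i$ kills $x_\bm^{-i}v_0$ --- with kernel $x_\bm^{-1}\C[\bx,\hat x_\bm]$; hence, replacing a nonzero $w\in N$ by $e_{21}(\bm)^s w$ for suitable $s$, we may assume $w=x_\bm^{-1}p$ with $0\neq p\in\C[\bx,\hat x_\bm]$, and, splitting off $e_{22}(0)$-eigenvectors, that $p$ is homogeneous of some degree $d$. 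If $d=0$ we are done. If $d\geq1$ I would run a descent on $d$: applying $e_{12}(-\bn)$ with $\bn\neq\bm$ to $w$, the computation underlying Lemma \ref{3.5} produces a term $x_\bm^{-1}\bigl(q^{-n_1n_2}\mu\,\partial p/\partial x_\bn + R_\bn(p)\bigr)$ of $x_\bm$-degree $-1$, where $R_\bn$ is a first-order operator coming from $\mathcal{E}_\bn$, together with correction terms of $x_\bm$-degree $\le-2$; the corrections are to be disposed of using further applications of $e_{21}(\bm)$ and the module axioms, and the scalars appearing along the way are, up to powers of $q$, values of polynomials in $\mu$ with integer roots, so $\mu\notin\Z$ keeps them nonzero and the descent strictly lowers $d$, ending at a nonzero multiple of $x_\bm^{-1}$. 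I expect this descent --- especially the claim that the higher-$x_\bm$-degree corrections can always be removed uniformly in $d$ --- to be the technical heart of the argument and its main obstacle.

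For the converse, $\mu\in\Z\Rightarrow \md_\bm M/M$ reducible, the point is exactly that one of the coefficients in the descent above vanishes. Here I would exhibit a proper nonzero submodule $N\subsetneq \md_\bm M/M$ directly, following where the descent stalls: the candidate is the submodule generated by an element $x_\bm^{-1}p_0$ (with $\deg p_0\ge1$) whose degree cannot be lowered because the relevant coefficient $\mu+k$ $(k\in\Z)$ is $0$ --- for instance, when $\mu\le-2$ the natural candidate is $\langle x_\bm^{-1}\rangle$ itself, since then $e_{12}(-\bm)^{-\mu-1}x_\bm^{-1}=0$ and the argument of Lemma \ref{Generator} breaks down. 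The second main obstacle, and where the real work lies, is to verify that such an $N$ is proper, i.e.\ that the cyclic generator $x_\bm^{-1}$ of $\md_\bm M/M$ does not lie in it: concretely one must show that the higher-power ``trailing'' terms produced by the $\mg$-action on the generators of $N$ can never be eliminated. Since $\md_\bm M/M$ has no obvious highest- or lowest-weight vector, this $N$ is infinite-codimensional, and a clean organization may have to combine Lemma \ref{Nil} and the structure of the twisted localization functor rather than a direct computation.
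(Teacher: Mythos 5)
The first part (reduction of $\md_\bm^bM$ for $b=k\in\Z$ to $\md_\bm M$) is correct and is exactly the paper's argument.

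For the direction $\mu\notin\Z\Rightarrow$ irreducibility, your strategy --- reduce a nonzero element of a submodule to the form $x_\bm^{-1}p$ with $p$ homogeneous of degree $d$ using $e_{21}(\bm)$, then descend on $d$ --- is the paper's strategy. But the part you flag as ``the technical heart'' is precisely where the paper supplies the missing idea: to lower the degree by exactly one while clearing the $x_\bm^{-2}$ and $x_\bm^{-3}$ tails, the paper applies the single composite operator $(e_{12}(-\bm)e_{21}(\bm)+B)\,(e_{12}(-\bm)e_{21}(\bm)+A)\,e_{12}(-\bn_1)$, where $A$ and $B$ are explicit scalars depending on $\mu$, $d$, $\bm$ chosen so that each factor annihilates exactly one of the trailing terms (this relies on Lemma~\ref{3.5}(1)--(2) and on $\mu\notin\Z$ to keep $A,B$ nonzero and the final coefficient nonzero). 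Your sketch does not reach this operator, and ``further applications of $e_{21}(\bm)$'' alone will not remove the $x_\bm^{-2},x_\bm^{-3}$ corrections without killing the leading term too, so as written this is a genuine gap.

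For the converse $\mu\in\Z\Rightarrow$ reducibility, your approach is genuinely different from the paper's and I think it cannot be made to work without an entirely new idea. You propose to exhibit a proper nonzero submodule, e.g.\ $\langle x_\bm^{-1}\rangle$ when $\mu\le -2$, and justify it by $e_{12}(-\bm)^{-\mu-1}x_\bm^{-1}=0$. But that identity only controls the $e_{12}(-\bm)$-orbit of $x_\bm^{-1}$: the full submodule $\langle x_\bm^{-1}\rangle$ is closed under all of $U(\mg)$, and already $e_{12}(-\bn)$ for $\bn\neq\bm$ sends $x_\bm^{-1}$ to $x_\bm^{-3}x_{2\bm-\bn}$ (up to a scalar), producing arbitrarily deep negative $x_\bm$-powers with new variables attached; nothing in your argument prevents these from generating everything. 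The paper sidesteps properness entirely by invoking Lemma~\ref{Nil}: if $\md_\bm M/M$ were irreducible, $e_{12}(-\bm)$ would act either torsion-freely or locally nilpotently. It is manifestly not locally nilpotent (its powers act nonzero on $x_\bm^{-j}$ for $j$ large, by the formula in Lemma~\ref{3.5}(2)); so it must be torsion-free, and the paper derives a contradiction by explicitly constructing a nonzero $w$ with $e_{12}(-\bm)w=0$, namely $w=\sum_{j=0}^{d}x_\bm^{j-l}w_j$ with $l=2d-\mu-1$ and $w_j$ a normalized $\mathcal{E}_\bm^j(x_\bn^d)$ for a suitably generic $\bn$. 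This kernel-vector construction, together with the $e_{12}$-dichotomy of Lemma~\ref{Nil}, is the key idea your proposal is missing, and it both settles the claim and avoids having to identify the proper submodule at all.
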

\begin{proof}In the case $b=k\in \Z$, define the following linear map
$$ \rho: \md_\bm M\rightarrow \md_\bm^bM; v\mapsto e_{21}(\bm)^k v ,$$ which is clearly a bijection.
From $$\rho(uv)=e_{21}(\bm)^k uv= e_{21}(\bm)^ku e_{21}(\bm)^{-k}e_{21}(\bm)^k v = u\cdot \rho(v),$$ where $u\in U^F, v\in \md_\bm M$,
we see that $ \rho$ is a $\mg$-module isomorphism.

($\Rightarrow$) Let $\mu\in\Z$.

{\bf Claim 1:} There exists a  nonzero  $w\in \md_\bm M/M$ such that $e_{12}(-\bm)\cdot w=0$.

Choose a positive integer  $d$ such that $-d+\mu+1< 0$.  Choose $\bn\in\Z^2$ such that $n_1m_2-n_2m_1\neq 0$. Let  $l=2d-\mu-1$ and
 $a_j=q^{-m_1m_2}(\mu+j+l+1-2d)(j-l)$ which is nonzero by the choice of $d$, for $j: 1\leq j\leq d$.
Set $w_j=\frac{1}{a_1\cdots a_j}\mathcal{E}_\bm^{j} (x_{\bn}^d)$ for $j: 0\leq j\leq d+1$. Note that  $w_0=x_{\bn}^d, w_{d+1}=0$, $\mathcal{E}_\bm w_j=a_{j+1}w_{j+1}$,
and $\sum\limits_{\br\in \bz^2}x_\br\p{\x_\br}w_j=(d-j)w_j$, for any $0\leq j\leq d-1$. By the choice of $\bn$, we see that $\p{\x_{\bm}}(w_j)=0$ for $j: 0\leq j\leq d$.

Let $w=\sum_{j=0}^d x^{j-l}_\bm w_j$. We will show that $w$ is annihilated by
$ e_{12}(-\bm)$.

Firstly, we have
\begin{align*}\mathcal{E}_\bm x^{j-l}_\bm w_j=&(\mathcal{E}_\bm x^{j-l}_m )w_j+  x^{j-l}_\bm (\mathcal{E}_\bm w_j)\\
&+\sum_{\bs,\br\in \bz^2}
q^{-m_2r_1-s_2m_1+s_2r_1}x_{-\bm+\bs+\br}(\p{\x_\bs}x^{j-l}_m)(\p{\x_{\br}}w_j)\\
&+\sum_{\bs,\br\in \bz^2}
q^{-m_2r_1-s_2m_1+s_2r_1}x_{-\bm+\bs+\br}(\p{\x_\br}x^{j-l}_\bm)(\p{\x_{\bs}}w_j)\\
=& q^{-m_1m_2}(j-l)(j-l-1)x^{j-l-1}_\bm w_j+a_{j+1}x^{j-l}_\bm w_{j+1}\\
& +2\sum_{\br\in \bz^2}q^{-m_2m_1}(j-l)x_\br x^{j-l-1}_\bm(\p{\x_{\br}} w_j)\\
=& q^{-m_1m_2}(j-l)(j-l-1)x^{j-l-1}_\bm w_j+a_{j+1}x^{j-l}_\bm w_{j+1}\\
& +2(d-j)q^{-m_2m_1}(j-l)x^{j-l-1}_\bm w_j\\
=& q^{-m_1m_2}(j-l)(-j-l-1+2d)x^{j-l-1}_\bm w_j+a_{j+1}x^{j-l}_\bm w_{j+1}.
\end{align*}

Consequently,
\begin{align*}& e_{12}(-\bm)w =\sum_{j=0}^d(q^{-m_1m_2}\mu\p{\x_{\bm}}-\mathcal{E}_\bm) x^{j-l}_\bm w_j\\
=& \sum_{j=0}^d q^{-m_1m_2}\mu\big((j-l)x^{j-l-1}_\bm w_j+  x^{j-l}_\bm( \p{\x_{\bm}} w_j)\big)\\
& -\sum_{j=0}^d   q^{-m_1m_2}(j-l)(-j-l-1+2d)x^{j-l-1}_\bm w_j-\sum_{j=0}^d a_{j+1}x^{j-l}_\bm w_{j+1}\\
=& \sum_{j=0}^d q^{-m_1m_2}(\mu+j+l+1-2d)(j-l)x^{j-l-1}_\bm w_j -\sum_{j=1}^d  a_{j}x^{j-l-1}_\bm w_{j}\\
=& q^{-m_1m_2}(\mu+l+1-2d)(-l)x^{-l-1}_\bm w_0=0,
\end{align*}
in the third equality, we have used the fact that $ \p{\x_{\bm}}(w_j)=0$.

If $\md_\bm M/M$ is an irreducible $\mg$-module, then  by Lemma \ref{Nil}, $ e_{12}(-\bm)$ acts locally nilpotently on $\md_\bm M/M$. On the other hand,
for a enough large   integer $j$,  by (3) in Lemma \ref{3.5}, we have
$$e_{12}(-\bm)^ix_\bm^{-j}=(-1)^iq^{-im_1m_2}j\cdots (j+i-1)(\mu+j+1)\cdots (\mu+j+i)x_\bm^{-j-i}$$ which is a nonzero element in
$\md_\bm M/M$, for any $i\in\N$. This is a contradiction. So  $\md_\bm M/M$ is reducible when $\mu\in \Z$.

($\Leftarrow$): Let $v$ be any nonzero element in $\md_\bm M/M$.  By Lemma \ref{Generator}, it suffices to show that there exists $u\in U(\mg)$ such
that $uv=x_\bm^{-1}$.

After applying some power of  $e_{21}(\bm)$ to $v$ if necessary, we may assume that $v=x_{\bm}^{-1}f$, where $f$ is a homogeneous polynomial in $\C[\bx, \hat x_\bm]$ of degree $d$.

\noindent{\bf Claim 2:} For such $v$, there exists $u\in U(\mg)$ such that $uv=x_{\bm}^{-1}f_1\neq 0$, where $f_1$ is a homogeneous polynomial in $\C[\bx, \hat x_\bm]$ whose  degree is smaller than $d$.

First, we consider the case that $v$ is a monomial, i.e., $v=x_\bm^{-1}x_{\bn_1}^{i_1}\cdots x_{\bn_k}^{i_k}$. From the proof of Lemma \ref{3.5},
we have that
\begin{align*}& e_{12}(-\bn_1)x_\bm^{-1}x_{\bn_1}^{i_1}\cdots x_{\bn_k}^{i_k}\\
=&-x_\bm^{-3}2q^{-n_{12}m_1-m_2n_{11}+m_2m_1}x_{-\bn_1+2\bm}x_{\bn_1}^{i_1}\cdots x_{\bn_k}^{i_k}\\
& +x_\bm^{-2}\sum_{j=1}^ki_jq^{-n_{12}m_1-n_{j2}n_{11}+n_{j2}m_1}x_{-\bn_1+\bn_j+\bm}x_{\bn_1}^{i_1}\cdots x_{\bn_j}^{i_j-1}\cdots x_{\bn_k}^{i_k}\\
& +x_\bm^{-2}\sum_{j=1}^ki_jq^{-n_{12}n_{j1}-m_2n_{11}+m_2n_{j1}}x_{-\bn_1+n_j+\bm}x_{\bn_1}^{i_1}\cdots x_{\bn_j}^{i_j-1}\cdots x_{\bn_k}^{i_k}\\
& +x_\bm^{-1}e_{12}(-\bn_1)(x_{\bn_1}^{i_1}\cdots x_{\bn_k}^{i_k})\\
 =& x_\bm^{-3}a_{-3}+x_\bm^{-2}a_{-2}+x_\bm^{-1}a_{-1},
\end{align*}
where
\begin{align*}a_{-1}=&2i_1q^{-n_{11}n_{12}}x_{\bn_1}^{-1}x_{\bn_1}^{i_1}\cdots x_{\bn_k}^{i_k}+ e_{12}(-\bn_1)(x_{\bn_1}^{i_1}\cdots x_{\bn_k}^{i_k})\\
=& i_1(3+\mu-i_1-2i_2\cdots-2i_k)q^{-n_{11}n_{12}}x_{\bn_1}^{-1}x_{\bn_1}^{i_1}\cdots x_{\bn_k}^{i_k}\\
& + \sum_{j=2}^k\sum_{l=2}^k i_l (i_j-\delta_{j,l})q^{-n_{12}n_{j1}-n_{l2}n_{11}+n_{l2}n_{j1}}x_{-\bn_1+\bn_l+\bn_j}x_{\bn_j}^{-1}x_{\bn_l}^{-1}x_{\bn_1}^{i_1}\cdots x_{\bn_k}^{i_k},\\
a_{-2}=&\sum_{j=2}^ki_j(q^{-n_{12}m_1-n_{j2}n_{11}+n_{j2}m_1}+q^{-n_{12}n_{j1}-m_2n_{11}+m_2n_{j1}})\\
& \times x_{-\bn_1+\bn_j+\bm}x_{\bn_1}^{i_1}\cdots x_{\bn_j}^{i_j-1}\cdots x_{\bn_k}^{i_k},\\
a_{-3}=& -2q^{-n_{12}m_1-m_2n_{11}+m_2m_1}x_{-\bn_1+2\bm}x_{\bn_1}^{i_1}\cdots x_{\bn_k}^{i_k}.
\end{align*}

Let $A=-2(\sum_{j=1}^k2i_j-\mu-1)q^{-m_2m_1}$. By Lemma \ref{3.5}, we have
\begin{align*}
&(e_{12}(-\bm)e_{21}(\bm)+A)e_{12}(-\bn_1)x_\bm^{-1}x_{\bn_1}^{i_1}\cdots x_{\bn_k}^{i_k}\\
=&2(\sum_{j=1}^k2i_j-\mu-1)q^{-m_1m_2}x_\bm^{-3}a_{-3}-x_\bm^{-2}\mathcal{E}_\bm(a_{-3})\\
 & + x_\bm^{-2}(\sum_{j=1}^k2i_j-\mu-2)q^{-m_2m_1}a_{-2}-x_\bm^{-1}\mathcal{E}_\bm(a_{-2})\\
 & + Ax_\bm^{-3}a_{-3}+Ax_\bm^{-2}a_{-2}+Ax_\bm^{-1}a_{-1}\\
=&  x_\bm^{-2}b_{-2}+x_\bm^{-1}b_{-1},
\end{align*}
where
\begin{align*}
b_{-2}
= & -\mathcal{E}_\bm(a_{-3})+ (\sum_{j=1}^k2i_j-\mu-2)q^{-m_2m_1}a_{-2} +Aa_{-2},\\
b_{-1}
= &-\mathcal{E}_\bm(a_{-2})+Aa_{-1}.
\end{align*}

Let $ B=-(\sum_{j=1}^k2i_j-\mu-2)q^{-m_1m_2}$. Then  we have that
\begin{align*}
&(e_{12}(-\bm)e_{21}(\bm)+B)(e_{12}(-\bm)e_{21}(\bm)+A)e_{12}(-\bn_1)x_\bm^{-1}x_{\bn_1}^{i_1}\cdots x_{\bn_k}^{i_k}\\
=&  (e_{12}(-\bm)e_{21}(\bm)+B)(x_\bm^{-2}b_{-2}+x_\bm^{-1}b_{-1})\\
=&e_{12}(-\bm)x_\bm^{-1}b_{-2}+Bx_\bm^{-2}b_{-2}+Bx_\bm^{-1}b_{-1}\\
=& -x_\bm^{-1}\mathcal{E}_\bm(b_{-2})+(\sum_{j=1}^k2i_j-\mu-2)q^{-m_1m_2}x_\bm^{-2}b_{-2}+Bx_\bm^{-2}b_{-2}+Bx_\bm^{-1}b_{-1}\\
=& x_\bm^{-1}(Bb_{-1}-\mathcal{E}_\bm(b_{-2}))+((\sum_{j=1}^k2i_j-\mu-2)q^{-m_1m_2}+B)x_\bm^{-2}b_{-2}\\
=& x_\bm^{-1}(Bb_{-1}-\mathcal{E}_\bm(b_{-2})),
\end{align*}
where \begin{align*}Bb_{-1}-\mathcal{E}_\bm(b_{-2})=& -B\mathcal{E}_\bm(a_{-2})+BAa_{-1}+ \mathcal{E}_\bm^2(a_{-3})\\
 & -(\sum_{j=1}^k2i_j-\mu-2)q^{-m_2m_1}\mathcal{E}_\bm(a_{-2}) -A\mathcal{E}_\bm(a_{-2})\\
 =&-A\mathcal{E}_\bm(a_{-2})+BAa_{-1}+ \mathcal{E}_\bm^2(a_{-3}).
\end{align*}

We can see that the coefficients of $x_{\bn_1}^{-1}x_{\bn_1}^{i_1}\cdots x_{\bn_k}^{i_k}$ in $\mathcal{E}_\bm(a_{-2})$ and $\mathcal{E}_\bm^2(a_{-3})$ are zero.
Thus the coefficient of of $x_{\bn_1}^{-1}x_{\bn_1}^{i_1}\cdots x_{\bn_k}^{i_k}$ in $Bb_{-1}-\mathcal{E}_\bm(b_{-2})$ is
$ABi_1(3+\mu-i_1-2i_2\cdots-2i_k)q^{-n_{11}n_{12}}$ which is nonzero,
since $\mu\not\in\Z$, $A\neq 0, B\neq 0$. Hence $Bb_{-1}-\mathcal{E}_\bm(b_{-2})\neq 0$.  Therefore, the claim 2 is true in this case.

For arbitrary nonzero $v\in\md_\bm M/M$, by the action of $e_{21}(\bm)$, we assume that
$$v=x_{\bm}^{-1}\sum\limits_{j_1,\dots,j_k\in\Z_+}a_{j_1,\dots,j_k}x_{\bn_1}^{j_1}\cdots x_{\bn_k}^{j_k},$$ where  $x_{\bn_1}^{j_1}\cdots x_{\bn_k}^{j_k}\in\C[\bx, \hat x_\bm]$,
$j_1+\cdots+j_k=d$. Suppose that the maximal degree of $x_{\bn_1}$ in $v$ is $i_1$. By the above arguments, we see that
the coefficient of $x_{\bm}^{-1}x_{\bn_1}^{i_1-1}\sum\limits_{j_2+\dots + j_k =d-i_1}a_{i_1,\dots,j_k}x_{\bn_2}^{j_2}\cdots x_{\bn_k}^{j_k}$ in  $(e_{12}(-\bm)e_{21}(\bm)+B)(e_{12}(-\bm)e_{21}(\bm)+A)e_{12}(-\bn_1)v$
is $ABi_1(3+\mu+i_1-2d)q^{-n_{11}n_{12}}$ which is nonzero. So the claim 2 is true in general.

By induction on the degree of $f$ in $v=x_{\bm}^{-1}f$, we can complete the proof.

\end{proof}
\subsection{The case $b\not\in\Z$}

\begin{theorem}\label{maintheorem2}Let $m\in \Z^2, b\in \C$.
If $b\notin \Z$, then  $\md_\bm^bM$ is irreducible if and only if $b+\mu \notin \Z$.
\end{theorem}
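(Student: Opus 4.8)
The plan is to study the action of $e_{12}(-\bm)$ on $\md_\bm^b M$, whose underlying space is $\bigoplus_{a\in\Z} x_\bm^a\,\C[\bx,\hat x_\bm]$, the $x_\bm^a$ ($a\in\Z$) together with the monomials in the remaining variables forming a basis. A direct computation using the formulas for $\Theta_b$ stated above — the twisted analogue of Lemma \ref{3.5} — gives, for a monomial $v_0$ of degree $d$ in the variables $x_\bn$ with $\bn\neq\bm$,
$$ e_{12}(-\bm)\cdot x_\bm^a v_0 = -x_\bm^a\,\mathcal{E}_\bm(v_0)+(a-b)(\mu+b+1-a-2d)\,q^{-m_1m_2}\,x_\bm^{a-1}v_0, $$
so that $e_{12}(-\bm)^i\cdot x_\bm^a = q^{-im_1m_2}\big(\prod_{l=0}^{i-1}(a-b-l)(\mu+b+1-a+l)\big)x_\bm^{a-i}$. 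Since $b\notin\Z$, the factors $a-b-l$ never vanish for $a,l\in\Z$, so these chains — and, as it will turn out, the whole question of irreducibility — are governed solely by the factors $\mu+b+1-a+l$, that is, by whether $\mu+b\in\Z$.

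For the implication $(\Rightarrow)$ I argue by contraposition. Suppose $b\notin\Z$ but $\mu+b=k\in\Z$; then necessarily $\mu\notin\Z$. By the formula above $e_{12}(-\bm)\cdot x_\bm^{k+1}=0$ (the factor $\mu+b+1-a$ vanishes at $a=k+1$), so $x_\bm^{k+1}$ is a nonzero $e_{12}(-\bm)$-torsion vector of $\md_\bm^b M$, whereas $e_{12}(-\bm)^i\cdot x_\bm^{k}=q^{-im_1m_2}\big(\prod_{l=0}^{i-1}(\mu-l)(l+1)\big)x_\bm^{k-i}\neq 0$ for every $i$ because $\mu\notin\Z$, so $x_\bm^{k}$ is not $e_{12}(-\bm)$-torsion. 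Thus $e_{12}(-\bm)$ acts on $\md_\bm^b M$ neither torsion-freely nor locally nilpotently, and Lemma \ref{Nil} shows that $\md_\bm^b M$ cannot be irreducible.

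For the implication $(\Leftarrow)$, assume $b\notin\Z$ and $\mu+b\notin\Z$; here the plan is to reproduce the proof of Theorem \ref{Maintheorem1}. Now all the scalars $(a-b)(\mu+b+1-a)$, $a\in\Z$, are nonzero, so $e_{12}(-\bm)$ and $e_{21}(\bm)$ move between all of the vectors $x_\bm^a$, and then the multiplication operators $e_{21}(\bn)$, $\bn\neq\bm$, show that each $x_\bm^a$ generates $\md_\bm^b M$; it therefore remains to show that $U(\mg)v$ contains some $x_\bm^a$ for each nonzero $v$. Since $e_{22}(0)$ acts semisimply, projecting onto one of its eigenspaces lets us assume $v=\sum_a x_\bm^a f_a$, where the $f_a\in\C[\bx,\hat x_\bm]$ are finitely many nonzero homogeneous polynomials with $a+\deg f_a=e$ independent of $a$. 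For such $v$ I would run the twisted version of Claim 2 of Theorem \ref{Maintheorem1}: apply $e_{12}(-\bn_1)$ for a suitable variable $x_{\bn_1}$ occurring in the part of $v$ of highest degree, and then cancel the spurious lower powers of $x_\bm$ it introduces by applying operators of the form $(e_{12}(-\bm)e_{21}(\bm)+A)(e_{12}(-\bm)e_{21}(\bm)+B)\cdots$, with constants $A,B,\dots$ chosen from $\mu$, $b$ and the degrees so as to annihilate those powers; the coefficient of the surviving top term is a nonzero scalar multiple of $\mu+b+n$ for an integer $n$, hence nonzero precisely because $\mu+b\notin\Z$. Iterating decreases the maximal degree of the $f_a$ until $v$ becomes a linear combination of powers $x_\bm^a$, from which a final application of polynomials in $e_{12}(-\bm)e_{21}(\bm)$ and $e_{21}(\bm)$ isolates a single $x_\bm^a$.

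The step I expect to be the main obstacle is exactly this degree reduction. Unlike the quotient $\md_\bm M/M$ of Theorem \ref{Maintheorem1}, where powers of $e_{21}(\bm)$ collapse everything onto the single reference monomial $x_\bm^{-1}v_0$, here one must carry an entire finite band of powers $x_\bm^a$ through every step; moreover the $\Theta_b$-corrections in $\Theta_b(e_{12}(-\bn_1))$ contribute additional $e_{21}(\bm)^{-1}$- and $e_{21}(\bm)^{-2}$-terms, so $e_{12}(-\bn_1)v$ spreads over more powers of $x_\bm$ than in the untwisted case, requiring more cleanup factors. Verifying that these cleanup operators kill exactly the unwanted layers — so that the induction runs on a quantity that genuinely decreases — while leaving a top term whose coefficient is a nonzero scalar multiple of some $\mu+b+n$ is the delicate calculation; everything else is the weight-module bookkeeping of Theorem \ref{Maintheorem1} carried over.
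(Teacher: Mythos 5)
Your forward direction matches the paper essentially verbatim: one computes $e_{12}(-\bm)\cdot x_\bm^j=-(j-b)(j-b-1-\mu)q^{-m_1m_2}x_\bm^{j-1}$, observes that when $\mu+b\in\Z$ the chain terminates at $x_\bm^{\mu+b+1}$ but does not terminate starting from $x_\bm^{\mu+b}$, and invokes Lemma~\ref{Nil}. That part is fine.

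For the reverse direction your plan diverges from the paper's, and as you yourself flag, the crux is left unverified. You propose to imitate Claim~2 of Theorem~\ref{Maintheorem1}: hit $v$ with $\Theta_b(e_{12}(-\bn_1))$ for some $\bn_1\neq\bm$, then sweep away the extra $x_\bm$-powers with a product of factors $e_{12}(-\bm)e_{21}(\bm)+A$. The paper does something structurally simpler: it never touches $e_{12}(-\bn_1)$ at all. Writing $v=\sum_{z\le i\le l}x_\bm^i f_i$ with $\deg f_i=d-i$, it applies only the single weight-zero operator $e_{21}(\bm)e_{12}(-\bm)-(b-z)(2d-\mu-b-z-1)q^{-m_1m_2}$, which on each layer produces $-x_\bm^{i+1}\mathcal{E}_\bm(f_i)$ plus a scalar multiple of $x_\bm^i f_i$, and whose scalar is tuned to annihilate precisely the bottom layer $i=z$; this strictly raises the minimal $x_\bm$-exponent, i.e.\ decreases the $\hat x_\bm$-degree, while staying inside one weight space. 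Iterating gives either a nonzero multiple of $x_\bm^d$ (done) or, crucially, a nonzero $w\in V$ with $e_{21}(\bm)e_{12}(-\bm)w=(b-p)(2d-\mu-b-p-1)q^{-m_1m_2}w$. This second outcome — the ``$a=0$'' case — is genuinely possible and your sketch does not account for it: your iteration simply asserts that the maximal degree of the $f_a$ keeps decreasing until one reaches a power of $x_\bm$, but the cleanup may kill the whole vector before that happens. The paper disposes of this case by a separate computation using $[e_{12}(-\bm),e_{21}(\bm)^{-1}]=-q^{-m_1m_2}e_{21}(\bm)^{-1}(e_{11}-e_{22})e_{21}(\bm)^{-1}$ to show $e_{12}(-\bm)^k\cdot w$ is a nonzero multiple of $e_{21}(\bm)^{-k}w$ for all $k$, and then feeds in the simplicity of $\C[\bx]$ together with the fact that $\Theta_b$ is an automorphism of $U^F$ to conclude $1\in V$.

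So there are two concrete gaps relative to a complete proof: (i) the degree-reduction calculation you describe as ``the delicate step'' is not carried out, and choosing to route it through $\Theta_b(e_{12}(-\bn_1))$ — whose extra $e_{21}(\bm)^{-1}$- and $e_{21}(\bm)^{-2}$-terms you would have to cancel — makes it considerably heavier than the paper's one-operator reduction; and (ii) there is no mechanism in your argument for the case where the reduction terminates at an eigenvector of $e_{21}(\bm)e_{12}(-\bm)$ rather than at a power of $x_\bm$, which the paper must and does handle separately. Adopting the operator $e_{21}(\bm)e_{12}(-\bm)-C$ as the sole reduction tool would both simplify (i) and make (ii) visible.
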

\begin{proof}($\Rightarrow$): Let $\mu+b\in\Z$.

From \begin{align*}e_{12}(-\bm)\cdot x_\bm^{j}
=& \Big(q^{-m_1m_2}\mu\p{\x_{\bm}}
-\sum_{\bs,\br\in \bz^2}
q^{-m_2r_1-s_2m_1+s_2r_1}x_{-\bm+\bs+\br}\p{\x_\bs}\p{\x_{\br}}\\
& +2b\sum_{\bs\in \bz^2} q^{-m_1m_2}x_{\bs}\p{\x_\bs}x_\bm^{-1}-b(b+\mu-1)q^{-m_1m_2} x_\bm^{-1}\Big) x_\bm^{j}\\
=& -(j-b)(j-b-1-\mu)q^{-m_1m_2}x_\bm^{j-1},
\end{align*}
we see that $e_{12}(-\bm)x_\bm^{b+1+\mu}=0$ and $e_{12}(-\bm)x_\bm^{j}\neq 0$ for any $j<b+1+\mu$.
By Lemma \ref{Nil}, $\md_\bm^bM$ is reducible.

($\Leftarrow$): Suppose that $b+\mu \notin \Z$.  Let $V$ be nonzero submodule of $\md_\bm^bM$.

\noindent{\bf Claim :} There is an integer $p$ and some nonzero $w\in V$ such that $$\Big(e_{21}(\bm)e_{12}(-\bm)-(b-p)(2d-\mu-b-p-1)\Big)w=a x_\bm^{d}$$ for some $a\in\C$, where
$d$ is a positive integer.

Let $v\in V$ be a nonzero homogeneous polynomial in $\C[x_\bm^{-1}, \bx, \hat x_\bm]$ of degree $d$. We assume that
$v=\sum\limits_{z\leq i\leq l}x_{\bm}^i f_i$, where each $f_{i}$ is a homogeneous polynomial in $\C[\bx, \hat x_\bm]$ of degree $d-i$.
We define $d-z$ as the ${\hat x_\bm}$-degree of $v$. Note that $(e_{11}-e_{22})\cdot v=(\mu-2d+2b)v $.

From \begin{align*}&e_{21}(\bm)\cdot e_{12}(-\bm)\cdot x_\bm^{-i}x_{\bn_1}^{i_1}\cdots x_{\bn_k}^{i_k}\\
=& e_{21}(\bm)\cdot \Big(-x_\bm^{-i} \mathcal{E}_\bm(x_{\bn_1}^{i_1}\cdots x_{\bn_k}^{i_k})+(\sum\limits_{j=1}^k2i_j-\mu-i-1)iq^{-m_2m_1}x_\bm^{-i-1}x_{\bn_1}^{i_1}\cdots x_{\bn_k}^{i_k}\\
 & +2b\sum_{\bs\in \bz^2} q^{-m_1m_2}x_{\bs}\p{\x_\bs}(x_\bm^{-i-1}x_{\bn_1}^{i_1}\cdots x_{\bn_k}^{i_k})-b(b+\mu-1)q^{-m_1m_2} x_\bm^{-i-1}x_{\bn_1}^{i_1}\cdots x_{\bn_k}^{i_k}\Big)\\
=& -x_\bm^{-i+1}\mathcal{E}_\bm(x_{\bn_1}^{i_1}\cdots x_{\bn_k}^{i_k})+(i+b)(\sum\limits_{j=1}^k2i_j-b-\mu-i-1)q^{-m_1m_2} x_\bm^{-i}x_{\bn_1}^{i_1}\cdots x_{\bn_k}^{i_k}
\end{align*}
we see that $\Big(e_{21}(\bm)e_{12}(-\bm)-(b-z)(2d-\mu-b-z-1)\Big)\cdot v$ has smaller ${\hat x_\bm}$-degree than $v$. By induction  on the ${\hat x_\bm}$-degree of $v$,
we know that the above claim is true.

If $a\neq 0$, from that  $e_{12}(-\bm)\cdot x_\bm^{d}$ is a nonzero multiple of $ x_\bm^{d-1}$ and $e_{21}(\bm)\cdot x_\bm^{d}=x_\bm^{d+1}$, we have that $\C[x_\bm]\subset V$. Consequently,  $V=\md_\bm^bM$.

If $a=0$, then $e_{12}(-\bm)\cdot w= (b-p)(2d-\mu-b-p-1)e_{21}(\bm)^{-1}w$.

Let $B_i=(i+b-p)(2d-b-\mu-1-i-p)q^{-m_1m_2}$ which is nonzero for any $i\in \Z_+$.
Using  the following formula:
$$e_{21}(\bm)^{-1}e_{12}(-\bm)-e_{12}(-\bm)e_{21}(\bm)^{-1}=q^{-m_1m_2}e_{21}(\bm)^{-1}(e_{11}-e_{22})e_{21}(\bm)^{-1},$$ we obtain that
\begin{align*} &e_{12}(-\bm)\cdot e_{21}(\bm)^{-k} \cdot w\\
=& \sum_{i=0}^{k-1}e_{21}(\bm)^{-i}\cdot[e_{12}(-\bm),e_{21}(\bm)^{-1}]\cdot e_{21}(\bm)^{-(k-i-1)} \cdot w+ B_0e_{21}(\bm)^{-k-1}\cdot w\\
=& -q^{-m_1m_2}\sum_{i=0}^{k-1}e_{21}(\bm)^{-i-1}\cdot (e_{11}-e_{22})\cdot e_{21}(\bm)^{-(k-i)}\cdot w+ B_0e_{21}(\bm)^{-k-1}\cdot w\\
=&\Big(B_0- q^{-m_1m_2}\sum_{i=0}^{k-1}(\mu-2d+2b+2(k-i))\Big) e_{21}(\bm)^{-k-1} \cdot w\\
=&\Big(B_0- q^{-m_1m_2}k(\mu-2d+2b+k+1)\Big)e_{21}(\bm)^{-k-1}\cdot w\\
=& B_ke_{21}(\bm)^{-k-1}\cdot w.
\end{align*}
By induction on $k$, we have that
\begin{align*} e_{12}(-\bm)^k \cdot w= B_0B_1\cdots B_{k-1}e_{21}(\bm)^{-k}w.\end{align*}
More precisely \begin{align*} e_{12}(-\bm)^{k+1} \cdot w= &B_0B_1\cdots B_{k-1}e_{12}(-\bm)\cdot e_{21}(\bm)^{-k}\cdot w\\
=&B_0B_1\cdots B_{k-1}B_ke_{21}(\bm)^{-k-1} \cdot w.
\end{align*}

From the action of $e_{21}(\bm)$, we can assume that $w\in \C[\bx]$. By the simplicity of the $\mg$-module $\C[\bx]$, there a $u_1\in U(\mg)$ such that
$u_1 w=1$. Since $\Theta_b$ is an automorphism of $U^F$, there are  $u_2\in U(\mg)$ and $k\in \Z_+$ such that $ \Theta(u_2 e_{21}(\bm)^{-k})=u_1$.

Then $$1=u_1 w=\Theta(u_2 e_{21}(\bm)^{-k})w=u_2 \cdot e_{21}(\bm)^{-k}\cdot w=C u_2  \cdot e_{12}(-\bm)^{k}\cdot w\in V,$$
where $C=\frac{1}{B_0B_1\cdots B_{k-1}}$. Since $\md_\bm^bM$ is generated by $1$, $V=\md_\bm^bM$. Now we complete the proof.
\end{proof}

\section*{Acknowledgment}
G.L. is partially supported by NSF of China
(Grant 11301143, 11771122) and the grant at Henan University (yqpy20140044).

\

\

\noindent G.L.: School of Mathematics and Statistics, and  Institute of Contemporary Mathematics, Henan University, Kaifeng 475004, China. Email: liugenqiang@amss.ac.cn

\vspace{0.2cm}\noindent Y.L.: School of Mathematics and Statistics, Henan University, Kaifeng 475004,  China. Email: 897981524@qq.com

\vspace{0.2cm}\noindent Y.W.: School of Mathematics and Statistics, Henan University, Kaifeng 475004,  China. Email: 2276588574@qq.com

\end{document}